\title{Nullstellensatz for relative existentially closed groups}
\author{M. Shahryari}
\address{M. Shahryari: Department of Mathematics, College of Science, Sultan Qaboos university, Muscat, Oman}
\email{m.ghalehlar@squ.edu.om}
\newcommand{\Rad}{\mathrm{Rad}}
\newcommand{\FX}{F_{\mathbf{V}}(X)}
\newtheorem{corollary}{Corollary}
\newtheorem {theorem}{Theorem}
\begin{document}

\maketitle
\begin{abstract}
We prove that  in every variety of $G$-groups, every $G$-existentially closed element satisfies nullstellensatz for finite consistent systems of equations. This will generalize {\bf Theorem G} of \cite{BMR1}. As a result we see that every pair of $G$-existentially closed elements in an arbitrary variety of $G$-groups generate the same quasi-variety and if both of them are $q_{\omega}$-compact, they are geometrically equivalent.
\end{abstract}

{\bf AMS Subject Classification} Primary 20F70, Secondary 08A99.\\
{\bf Keywords} algebraic geometry over groups; nullstellensatz; existentially closed groups; varieties of groups; quasi-varieties.

%1111111111111111111111111111111111111111111111111111111111111111111111111
%%%%%%%%%%%%%%%%%%%%%%%%%%%%%%%%%%%%%%%%%%%%%%%%%%%%%%%%%%%%%%%%%%%%%%
\vspace{3cm}

\section{Introduction}
Let $K$ be an algebraically closed field and $K[x_1, \ldots, x_m]$ be the ring of $n$-variable polynomials with coefficients in $K$. Let $S\subseteq K[x_1, \ldots, x_m]$ be a {\em system of equations} and $\Rad_K(S)$ denotes the set of all {\em consequences} of $S$ over $K$. Note that a polynomial $f$ is called a consequence of $S$, if every solution of the system $S=1$ is also a solution of $f=1$. Hilbert's classical nullstellensatz, describes the ideal $\Rad_K(S)$ in terms of elementary algebraic operation. More precisely by Hilbert's nullstellensatz, we have
$$
\Rad_K(S)=\sqrt{\langle S\rangle_{\mathrm{id}}},
$$
where $\langle S\rangle_{\mathrm{id}}$ is the ideal generated by $S$ and $\sqrt{-}$ shows the radical: the set of all polynomials $f$ such that $f^m\in \langle S\rangle_{\mathrm{id}}$ for some power $m$.

In 1999, Baumslag, Myasnikov, and Remeslennikov published the article {\em Algebraic Geometry over Groups, I. Algebraic Sets and Ideal Theory}, where the foundations of algebraic geometry of groups is introduced, \cite{BMR1}. Among many important notions discussed in their work is the notion of nullstellensatz for group equations. Generally it is any {\em effective} (algebraic or algorithmic) description of the radical of systems of equations over groups.  The last section of that article was devoted to nullstellensatz in groups. They proved the following theorem ({\bf Theorem G} of \cite{BMR1}):\\

\begin{theorem}
Let $H$ be a $G$-group and suppose that $H$ is $G$-existentially closed. Then every finite $H$-consistent system of group equations with coefficients from $G$, satisfy nullstellensatz, i.e.
$$
\Rad_H(S)= \langle S^{G[x_1, \ldots, x_m]}\rangle.
$$
\end{theorem}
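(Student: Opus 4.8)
The plan is to prove the two inclusions separately; the inclusion $\langle S^{\GX}\rangle \subseteq \Rad_H(S)$ is the routine direction, while the reverse inclusion is where existential closedness does the work. For the easy direction, observe that for each solution $\bar h$ of the system $S=1$ in $H$, the evaluation map $u \mapsto u(\bar h)$ is a $G$-homomorphism $\GX \to H$, so $\Rad_H(S)$ is the intersection of the kernels of these evaluation maps and is therefore a normal subgroup of $\GX$ containing $S$. Consequently it contains the normal closure $\langle S^{\GX}\rangle$, which is by definition the smallest normal subgroup of $\GX$ containing $S$.

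For the hard inclusion $\Rad_H(S) \subseteq \langle S^{\GX}\rangle$, I would argue by contradiction: suppose $w \in \Rad_H(S)$ but $w \notin \langle S^{\GX}\rangle$. Form the coordinate group $Q = \GX / \langle S^{\GX}\rangle$. Because $S$ is $H$-consistent, it has a solution in $H$, and the resulting $G$-homomorphism $Q \to H$ composes with $G \to Q$ to yield the inclusion $G \hookrightarrow H$; hence $G$ embeds into $Q$ and $Q$ is a genuine $G$-group. In $Q$ the tuple of images $\bar x = (\bar x_1, \dots, \bar x_m)$ is a solution of $S=1$, while its image $\bar w$ is nontrivial precisely because $w \notin \langle S^{\GX}\rangle$.

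Next I would manufacture a $G$-overgroup of $H$ witnessing the same behaviour. Since $G$ embeds in both $H$ and $Q$, the amalgamated free product $K = H *_G Q$ exists and contains both factors as subgroups meeting in $G$. Viewing $\bar x$ inside the copy of $Q$ in $K$, it remains a solution of $S=1$ (its coefficients lie in $G \subseteq Q$), and $w(\bar x) = \bar w \neq 1$ survives because $Q$ embeds into $K$. Thus the finite system of equations and inequations $\{\, S = 1,\ w \neq 1 \,\}$, whose coefficients lie in $G \subseteq H$, is solvable in the $G$-group $K \supseteq H$. By the $G$-existential closedness of $H$, this system already has a solution $\bar h'$ in $H$; but then $\bar h'$ solves $S=1$ while $w(\bar h') \neq 1$, contradicting $w \in \Rad_H(S)$. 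This forces $w \in \langle S^{\GX}\rangle$ and completes the argument.

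I expect the main obstacle to be the embedding step: one must be sure that the inequation $w \neq 1$ survives into a $G$-group that genuinely contains $H$, so that the existential-closedness hypothesis can be invoked on a system with constants from $H$. In the variety of all groups this is delivered for free by the fact that amalgamated free products embed their factors, but executing the same scheme over an arbitrary variety $\mathbf{V}$ of $G$-groups (the setting of the general result) requires replacing $H *_G Q$ by the $\mathbf{V}$-coproduct of $H$ and $Q$ over $G$, together with a verification that the factors embed into it, which is the genuinely delicate point.
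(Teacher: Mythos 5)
Your proof is correct, but at the decisive step it takes a genuinely different route from the paper. The paper never proves this statement in isolation: it derives it as the special case $\mathbf{V}=$ (all $G$-groups) of Theorem 2, and that proof shares your skeleton exactly --- pass to the coordinate group $\GX/\langle S^{\GX}\rangle$ (your $Q$, the paper's $K$), note that $H$-consistency forces $G\cap\langle S^{\GX}\rangle=1$, append an inequation $f\not\approx 1$ for some $f$ outside the normal closure, and use existential closedness to contradict $f\in\Rad_H(S)$. The divergence is in the extension where the enlarged system is solved: you place the solution in the amalgamated free product $H\ast_G Q$, while the paper forms the direct product $H'=H\times K$. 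Your choice buys airtightness in the present setting: by the classical embedding theorem for amalgams, $H\ast_G Q$ contains both $H$ and $Q$ as $G$-subgroups, so the overgroup form of $G$-existential closedness applies verbatim; but, as you yourself observe, amalgams leave a proper variety, so this argument cannot yield Theorem 2. The paper's choice buys generality --- direct products survive in every variety --- at the cost of precisely the point you flagged as delicate: with the interpretation of the constants that makes $(x_1Q,\ldots,x_nQ)$ a solution, the copy $H\times 1$ is \emph{not} a $G$-subgroup of $H\times K$ (a $G$-embedding of $H$ into $H\times K$ would require a $G$-homomorphism $H\to K$, which need not exist), so the overgroup definition of existential closedness cannot be invoked literally there. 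What the paper's argument really rests on is its alternative characterization of relative $G$-existentially closed groups --- every finite $\mathbf{V}$-consistent system of $G$-equations and $G$-inequations already has a solution in $H$ --- under which solvability of the enlarged system in $K$ alone finishes the proof and the product $H'$ is superfluous. For the variety of all $G$-groups, that characterization is equivalent to the overgroup definition exactly because of the amalgam construction you use, so your proof can be read as supplying the bridge that the paper leaves implicit.
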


In the above theorem $G$ is an arbitrary fixed group and a $G$-group is a group which contains $G$ as a distinguished subgroup. An equation with coefficients in $G$ is an element of the free product $G[x_1, \ldots, x_m]=G\ast F[x_1, \ldots, x_m]$, where $F[x_1, \ldots, x_m]$ is the free group generated by the variables $x_1, \ldots, x_n$. Being $G$-existentially closed  means that any finite system of $G$-equations and $G$-inequations which has a solution in a super-group of $H$, has already a solution in $H$.  A system is $H$-consistent if it has at least one solution in $H$.  Note that the notation
$$
\langle S^{G[x_1, \ldots, x_m]}\rangle
$$
is used for the normal closure of $S$ in the group $G[x_1, \ldots, x_m]$. There are many similarities between Theorem 1 above and the classical nullstellensatz of Hilbert, because it can be shown that in the variety of commutative rings with identity, algebraically closed fields paly the role of existentially closed structures. \\

The aim of this article is to generalize the above theorem for the case of existentially closed groups in an arbitrary variety of $G$-groups.  Our main theorem has the following formulation:\\

\begin{theorem}
Let $\mathbf{V}$ be a variety of $G$-groups and $H\in \mathbf{V}$ be a relative $G$-existentially closed element. Then for every finite $\mathbf{V}$-consistent system $S\subseteq G[x_1, \ldots, x_m]$, we have
$$
\Rad_H(S)=\langle S^{G[x_1, \ldots, x_m]}\rangle \mathrm{Id}_{\mathbf{V}}(x_1, \ldots, x_n).
$$
\end{theorem}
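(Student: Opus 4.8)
The plan is to separate the two inclusions and put all the weight on the nontrivial one. Write $X=\{x_1,\dots,x_m\}$ and $R=\langle S^{\GX}\rangle\,\mathrm{Id}_{\mathbf V}(X)$. The inclusion $R\subseteq\Rad_H(S)$ is routine: $\Rad_H(S)$ is a normal subgroup of $\GX$ containing $S$, hence contains the normal closure $\langle S^{\GX}\rangle$; and since $H\in\mathbf V$, every law in $\mathrm{Id}_{\mathbf V}(X)$ vanishes on all of $H^m\supseteq V_H(S)$, so $\mathrm{Id}_{\mathbf V}(X)\subseteq\Rad_H(S)$ as well. Thus everything reduces to proving $\Rad_H(S)\subseteq R$.

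For this I would pass to the coordinate group $\Gamma=\GX/R$. First I would use $\mathbf V$-consistency to verify that $\Gamma$ is a genuine $G$-group lying in $\mathbf V$: a solution of $S$ in some $A\in\mathbf V$ gives a $G$-homomorphism $\Gamma\to A$ injective on $G$, so $G\cap R=1$. Then $G$-homomorphisms $\psi\colon\Gamma\to H$ correspond exactly to solutions of $S$ in $H$, and an element $\bar f\in\Gamma$ lies in $\ker\psi$ iff the associated solution satisfies $f=1$. Consequently $\Rad_H(S)/R=\bigcap_\psi\ker\psi$, and the theorem becomes the separation statement: for every $\bar f\neq1$ in $\Gamma$ there is a $G$-homomorphism $\psi\colon\Gamma\to H$ with $\psi(\bar f)\neq1$; equivalently, for every $f\notin R$ the finite system $S=1,\ f\neq1$ is solvable in $H$.

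Here relative $G$-existential closedness enters. To solve $S=1,\ f\neq1$ in $H$ it is enough to solve it in some $M\in\mathbf V$ with $H\leq M$, and the generic tuple $(\bar x_1,\dots,\bar x_m)$ is already such a solution inside $\Gamma\in\mathbf V$ (it satisfies $S=1$ by construction and $f\neq1$ since $\bar f\neq1$). So the task is to embed $H$ into a $\mathbf V$-extension that retains this witness; the natural candidate is the coproduct in $\mathbf V$ amalgamating the common subgroup $G$, say $M=H\amalg_G^{\mathbf V}\Gamma$. That $H$ embeds in $M$ I would get by a retraction: once $S$ is known to be $H$-consistent, a solution yields a $G$-homomorphism $\theta\colon\Gamma\to H$, and $\mathrm{id}_H$ together with $\theta$ induce a $G$-retraction $M\to H$, whence $H\hookrightarrow M$. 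The required upgrade from $\mathbf V$-consistency to $H$-consistency I would obtain by the same retraction device applied to a witness $A\in\mathbf V$ in place of $\Gamma$, again invoking existential closedness.

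The crux, and the step I expect to be the real obstacle, is the \emph{survival} of the witness, that is, the injectivity on $\bar f$ of the canonical map $\Gamma\to M$. In the variety of all $G$-groups this is free, because both factors embed into the ordinary amalgamated free product $H\ast_G\Gamma$, and this is exactly the mechanism underlying Theorem 1. For a general variety, however, $M$ is $H\ast_G\Gamma$ factored by the laws of $\mathbf V$, and varieties need not admit amalgamation, so the $\Gamma$-side may collapse and $\bar f$ be killed while the retraction controls only the $H$-side. Overcoming this is where the genericity of the $G$-existentially closed group $H$, rather than mere coproduct formalism, must be used: I would reduce survival of a prescribed $\bar f\neq1$ to exhibiting a single $C\in\mathbf V$ carrying compatible $G$-homomorphisms from $H$ and from $\Gamma$ under which $\bar f$ is not annihilated, i.e.\ to showing that the mixed $H$–$\Gamma$ law-instances generating $\ker(\Gamma\to M)$ cannot reach $\bar f$; establishing this from the defining property of relative existentially closed elements is the heart of the argument.
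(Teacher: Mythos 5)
Up to notation, the first half of your proposal coincides with the paper's proof: the easy inclusion, the passage to the coordinate group $\Gamma=\GX/R$ (the paper's group $K$, defined inside $\FX$, is canonically the same $G$-group), the use of $\mathbf{V}$-consistency to get $G\cap R=1$, and the reduction of the hard inclusion to the separation statement that for each $f\notin R$ the finite system $S\approx 1,\ f\not\approx 1$ has a solution in $H$. The genuine gap comes after that: you pose $M=H\amalg_G^{\mathbf{V}}\Gamma$, observe (correctly) that $\bar{f}$ may be killed in $M$ because varieties need not have the amalgamation property, and then leave the survival of the witness as ``the heart of the argument.'' That step is never carried out, so the hard inclusion is not proved. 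There is also a circularity in your auxiliary step: the retraction you invoke to embed $H$ into $H\amalg_G^{\mathbf{V}}A$ presupposes a $G$-homomorphism $A\to H$, and the existence of such a map is essentially the $H$-consistency you are trying to derive.

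The idea your proposal is missing is that the paper never forms an amalgam at all: it uses the \emph{direct product}. Put $H'=H\times\Gamma$. Varieties are closed under products, so $H'\in\mathbf{V}$; and $\Gamma$ is a direct factor of $H'$, so nothing of $\Gamma$ collapses: the generic tuple $(\bar{x}_1,\ldots,\bar{x}_m)$ still satisfies $S\approx 1$ and $f\not\approx 1$ in $H'$. Since $H'$ is an extension of $H$ lying in $\mathbf{V}$, relative existential closedness produces a solution of this finite system in $H$, i.e.\ a point of $V_H(S)$ at which $f\neq 1$, whence $f\notin\Rad_H(S)$ and $\Rad_H(S)\subseteq R$. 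So the cure for your crux is not a subtler use of genericity, but the replacement of the coproduct by the product, where witness survival is automatic and no amalgamation property is ever invoked. Two caveats in fairness to you: the paper is terse about the $G$-group structure on $H\times\Gamma$ (for nontrivial $G$ the factors of a product of $G$-groups are not automatically $G$-subgroups; one repairs this by taking the diagonal copy of $G$ and pairing the generic tuple with a solution of $S$ in $H$), and the upgrade from $\mathbf{V}$-consistency to $H$-consistency is exactly the equivalence the paper asserts as ``clear'' before its proof. So your unease points at real subtleties in the variety setting, but the decisive device that closes the paper's argument --- product instead of coproduct --- is absent from your proposal, and without it the proof does not go through.
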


Recall that, in this theorem,
$$
\mathrm{Id}_{\mathbf{V}}(x_1, \ldots, x_n)\unlhd G[x_1, \ldots, x_m]
$$
consists of all $G$-identities of the variety $\mathbf{V}$, having variables $x_1, \ldots, x_n$.

\section{The proof}
In this section, we prove Theorem 2, but before, we collect some necessary notions from algebraic geometry over groups. For detailed discussion, one can consult \cite{BMR1} and \cite{MR}.

We assume that $G$ is an arbitrary group. A $G$-group is a pair $(H, \lambda)$, where $H$ is a group and $\lambda: G\to H$ is an embedding. If there is no risk of confusion, we will say that $H$ is a $G$-group, and so it contains a distinguished copy of $G$. Let $\mathcal{L}=(\cdot, ^{-1}, 1)$ be the language of groups and for any $g\in G$ attach
a constant symbol $g$ to $\mathcal{L}$. We denote the extended language by $\mathcal{L}(G)$ and so every $G$-group $H$ becomes an algebra of type $\mathcal{L}(G)$, if we interpret $g$ as $\lambda(g)$. Note that any congruence of $H$ is in fact a normal subgroup $K$ with the property $G\cap K=1$. Through this section, we will call such a normal subgroup an $G$-ideal.

For a set $X=\{ x_1, \ldots, x_n\}$ the free $G$-group generated by $X$ is the free product $G[X]=G\ast F[X]$, where $F[X]$ is the ordinary free group on $X$. We will assume that the embedding $G\hookrightarrow G[X]$ is the inclusion map. Any subset $S\subseteq G[X]$ corresponds to a  system of $G$-equations, and if $w=w(x_1,\ldots, x_n, g_1, \ldots, g_m)\in G[X]$, then the expression $w\approx 1$ is a $G$-equation with  coefficient $g_1,\ldots, g_m\in G$. Let $(H, \lambda)$ be a $G$-group. We say that $\overline{a}=(a_1, \ldots, a_n )\in H^n$ is a solution for this $G$-equation if
$$
w(a_1,\ldots, a_n, \lambda(g_1), \ldots, \lambda(g_m))=1.
$$
We denote by $V_H(S)$ the algebraic set corresponding to a system of $G$-equations $S$, which is the set of all common solutions of the elements of $S$. Define the radical of $S$, with respect to $H$, as
$$
\Rad_H(S)=\{ w\in G[X]: V_H(S)\subseteq V_H(w\approx 1)\}.
$$
This is clearly a $G$-ideal  of $G[X]$, which contains the normal closure $\langle S^{G[X]}\rangle$. The quotient group $\Gamma_H(S)=G[X]/\Rad_H(S)$ is  called the corresponding coordinate $G$-group of $S$ over $H$. Many important problems of algebraic geometry over groups transform to the study of these coordinate groups, and hence it is important to have an algebraic description of the radical $\Rad_H(S)$.

Generally there is no such an algebraic description for every system $S$, because it is equivalent to axiomatizablity of the prevariety generated by $H$ in the class of $G$-groups, see \cite{MR}. In some cases, such a description exists, and in this situation we say that a $G$-group $H$ satisfies the generalized nullstellensatz for a system $S$. Recall that a  $G$-group $H$ is $G$-existentially closed, if every finite set of $G$-equations and $G$-inequations with a solution in a larger $G$-group $H^{\prime}$ has already a solution in $H$. As we said in the introduction, Baumslag, Myasnikov, and Remeslennikov, proved that $G$-existentially closed $G$-groups satisfy nullstellensatz for finite consistent systems. We can generalize their result for the case of relatively existentially closed elements of varieties. If $\mathbf{V}$ is a variety of $G$-groups, then an element $H\in \mathbf{V}$ is relative $G$-existentially closed,  if every finite set of  $G$-equations and $G$-inequations with a solution in a larger $G$-group $H^{\prime}\in \mathbf{V}$ has already a solution in $H$. Clearly, this is equivalent to the existence of solution in $H$ for every $\mathbf{V}$-consistent finite system of $G$-equations and $G$-inequations (systems with at least one solution in some element of $\mathbf{V}$). It is proved that every element of $\mathbf{V}$ embeds in some relative $G$-existentially closed $G$-group $H\in \mathbf{V}$, and this $H$ is not too large (see for example \cite{Shah}). Now, we are ready to give the proof of Theorem 2.

\begin{proof}
Suppose $X=\{ x_1, \ldots, x_n\}$. We denote the free $G$-group of the variety $\mathbf{V}$ over the set $X$ by $\FX$. Let $T$ be the image of $S$ in $\FX$, that is the set $S$ modulo $G$-identities of the variety $\mathbf{V}$. Suppose also $\Rad_H^{\mathbf{V}}(T)$ is relative radical of $T$ in $\mathbf{V}$. Then we have $\Rad_H^{\mathbf{V}}(T)$ is a $G$-ideal of $\FX$. Let $Q=\langle T^{\FX}\rangle$ be the normal closure of $T$ in $\FX$. Note that $V_H(Q)\neq \emptyset$, so $Q\cap G=1$. Hence $K=\FX/Q$ is a $G$-group and it belongs to $\mathbf{V}$. Let $T=\{ w_1, \ldots, w_k\}$ and suppose $f$ does not belong to $Q$. Consider the finite system of $G$-equations and $G$-inequations $T^{\prime}$:
$$
w_1\approx 1, \ldots, w_k\approx 1, f\not\approx 1.
$$
Let $H^{\prime}=H\times K$ and $b_1=x_1Q, \ldots, b_n=x_nQ$. The for all $1\leq i\leq k$, we have
$$
w_i(b_1, \ldots, b_n)=w_i(x_1, \ldots, x_n)Q=Q (=1\ in\ K),
$$
and on the other hand
$$
f(b_1, \ldots, b_n)=fQ\neq Q.
$$
This shows that $(b_1, \ldots, b_n)$ is a solution of $T^{\prime}$ in $H^{\prime}$, therefore $T^{\prime}$ has a solution in $H$, say $a_1, \ldots, a_n\in H$. This means that $(a_1, \ldots, a_n)\in V_H(T)$, but since $f(a_1, \ldots, a_n)\neq 1$, so $f$ does not belong to $\Rad_H^{\mathbf{V}}(T)$. This shows that $\Rad_H^{\mathbf{V}}(T)\subseteq Q$ and hence $ Q=\Rad_H^{\mathbf{V}}(T)$. Now, we claim that
$$
\Rad_H^{\mathbf{V}}(T)=\frac{\Rad_H(S).\mathrm{Id}_{\mathbf{V}}(X)}{\mathrm{Id}_{\mathbf{V}}(X)}.
$$
Note that $\FX=G[X]/I$, where $I$ stands for $\mathrm{Id}_{\mathbf{V}}(X)$. Note that also
$$
T=\{ wI:\ w\in S\}.
$$
Since for all $(h_1, \ldots, h_n)\in H^n$, we have
$$
(wI)(h_1, \ldots, h_n)=w(h_1, \ldots, h_n),
$$
so, we have $V_H(T)=V_H(S)$. Therefore
\begin{eqnarray*}
\Rad_H^{\mathbf{V}}(T)&=&\Rad_H^{\mathbf{V}}(V_H(T))\\
                      &=&\Rad_H^{\mathbf{V}}(V_H(S))\\
                      &=&\{wI\in \FX: V_H(S)\subseteq V_H(wI\approx 1)\}\\
                      &=&\{wI\in \FX: V_H(S)\subseteq V_H(w\approx 1)\}\\
                      &=&\{wI\in \FX: w\in \Rad_H(S)\}\\
                      &=&\frac{\Rad_H(S).I}{I}.
\end{eqnarray*}
Note that the normal closure $\langle T^{\FX}\rangle$ is generated by the elements of the form $(uI)(wI)^{\pm1}(u^{-1}I)$, with $u\in G[X]$ and $w\in S$. Therefore, we have
$$
\langle T^{\FX}\rangle=\frac{\langle S^{G[X]}\rangle.I}{I}.
$$
Comparing these two last equalities, we obtain
$$
\Rad_H(S)=\langle S^{G[X]}\rangle. \mathrm{Id}_{\mathbf{V}}(X).
$$
\end{proof}

A $G$-group $H$ is called $q_{\omega}$-compact, if for every system $S$, we have
$$
\Rad_H(S)=\bigcup\{ \Rad_H(S_0): S_0\subseteq S, \ S_0=finite\}.
$$
Being $q_{\omega}$-compact is a generalization of the property of being $G$-equational Noetherian (see \cite{MR} for its various equivalents and logical implications).
We say that two $G$-groups $H$ and $H^{\prime}$ are geometrically equivalent, if $\Rad_H(S)=\Rad_{H^{\prime}}(S)$, for all system $S$. In this case the algebraic geometry of $H$ and $H^{\prime}$ are essentially the same. As a result, we have

\begin{corollary}
Let $\mathbf{V}$ be a variety of $G$-groups and $H$ and $H^{\prime}$ be two relative $G$-existentially closed elements of $\mathbf{V}$. If $H$ and $H^{\prime}$ are $q_{\omega}$-compact, then they are geometrically equivalent.
\end{corollary}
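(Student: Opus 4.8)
The plan is to reduce geometric equivalence of $H$ and $H'$ to the coincidence of their radicals on finite systems, and then to read off the latter directly from Theorem 2. The key observation is that the right-hand side of the formula in Theorem 2, namely $\langle S^{\GX}\rangle \mathrm{Id}_{\mathbf{V}}(X)$, depends only on the system $S$ and the ambient variety $\mathbf{V}$, and not on the particular relative $G$-existentially closed element used to compute it. Consequently, for every finite $\mathbf{V}$-consistent system $S$ we obtain at once
$$
\Rad_H(S)=\langle S^{\GX}\rangle \mathrm{Id}_{\mathbf{V}}(X)=\Rad_{H'}(S).
$$

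Next I would dispose of the finite systems that fail to be $\mathbf{V}$-consistent. Because $H$ is relative $G$-existentially closed, a finite system of $G$-equations has a solution in $H$ precisely when it is $\mathbf{V}$-consistent, and the same holds for $H'$. Thus an inconsistent finite $S$ satisfies $V_H(S)=V_{H'}(S)=\emptyset$, and for an empty algebraic set the defining condition $V_H(S)\subseteq V_H(w\approx 1)$ is vacuously true for every $w$, so the radical is the whole group $\GX$. Hence $\Rad_H(S)=\GX=\Rad_{H'}(S)$ in this case as well, and combining with the previous paragraph we conclude $\Rad_H(S)=\Rad_{H'}(S)$ for \emph{every} finite system $S$.

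Finally I would pass from finite systems to arbitrary ones via $q_{\omega}$-compactness. For any system $S$ the inclusion $\bigcup\{\Rad_H(S_0):S_0\subseteq S,\ S_0\ \mathrm{finite}\}\subseteq \Rad_H(S)$ always holds, since shrinking the system to a subset $S_0$ enlarges the solution set and therefore shrinks the radical; $q_{\omega}$-compactness of $H$ upgrades this to an equality, and likewise for $H'$. Since $\Rad_H(S_0)=\Rad_{H'}(S_0)$ for every finite $S_0$ by the previous steps, taking the union over all finite $S_0\subseteq S$ yields $\Rad_H(S)=\Rad_{H'}(S)$, which is exactly the assertion that $H$ and $H'$ are geometrically equivalent.

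The substantive content of the argument is the $H$-independence of the radical formula furnished by Theorem 2; everything after that is bookkeeping. The only step demanding a little care — and the mild obstacle here — is the inconsistent finite case, where one must invoke relative existential closedness to guarantee that $\mathbf{V}$-consistency, nonemptiness of the algebraic set in $H$, and nonemptiness of the algebraic set in $H'$ all coincide, so that for a non-consistent system the two radicals are simultaneously forced to equal the full group $\GX$.
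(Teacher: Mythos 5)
Your proposal is correct and follows essentially the same route as the paper: Theorem 2 makes the radical of a finite $\mathbf{V}$-consistent system equal to $\langle S^{\GX}\rangle\,\mathrm{Id}_{\mathbf{V}}(X)$, hence independent of the existentially closed group, $q_{\omega}$-compactness reduces arbitrary systems to their finite subsets, and inconsistent systems are disposed of via empty algebraic sets. One minor remark: in the inconsistent case you do not actually need existential closedness --- since $H, H^{\prime}\in\mathbf{V}$, a system with no solution in any member of $\mathbf{V}$ trivially has $V_H(S)=V_{H^{\prime}}(S)=\emptyset$ --- and your choice to settle all finite systems first and then apply $q_{\omega}$-compactness uniformly, rather than case-splitting on consistency of the whole system $S$ as the paper does, is a purely organizational difference.
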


\begin{proof}
First suppose that $S\subseteq G[X]$ is a consistent system of $G$-equations. Then, we have
\begin{eqnarray*}
\Rad_H(S)&=&\bigcup\{ \Rad_H(S_0): S_0\subseteq S, \ S_0=finite\}\\
         &=&\bigcup\{ \langle S_0^{G[X]}\rangle.\mathrm{Id}_{\mathbf{V}}(X): S_0\subseteq S, \ S_0=finite\}\\
         &=&\bigcup\{ \Rad_{H^{\prime}}(S_0): S_0\subseteq S, \ S_0=finite\}\\
         &=&\Rad_{H^{\prime}}(S).
\end{eqnarray*}
If $S$ is not consistent, then we have $V_H(S)=V_{H^{\prime}}(S)=\emptyset$, and hence $\Rad_H(S)=\Rad_{H^{\prime}}(S)$.
\end{proof}

In the special case of $G=1$, every element of the variety  of metabelian groups or the variety of nilpotent groups of a fixed class is equationally Noetherian and hence is $q_{\omega}$-compact. Therefore, we have the next result:

\begin{corollary}
Let $H$ and $H^{\prime}$ be two relative existentially closed groups in the variety of metabelian groups or in the variety of nilpotent groups of the fixed class $c$. Then $H$ and $H^{\prime}$ are geometrically equivalent as $1$-groups.
\end{corollary}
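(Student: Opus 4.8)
The plan is to realise this statement as a direct specialisation of Corollary~1 to the case $G=1$. When $G=1$, a $1$-group is simply a group, the distinguished copy of $G$ disappears, and ``relative $G$-existentially closed'' becomes ``relative existentially closed'' in the ambient variety. Thus, taking $\mathbf{V}$ to be the variety of metabelian groups (respectively, the variety of nilpotent groups of class $c$), the hypotheses on $H$ and $H'$ match those of Corollary~1 word for word, with the single exception that Corollary~1 also demands $q_\omega$-compactness. Consequently the whole task reduces to verifying that every member of these varieties is $q_\omega$-compact.

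For this I would invoke the fact, recorded in the sentence preceding the statement and available in the literature, that every group in the variety of metabelian groups and every group in the variety of nilpotent groups of a fixed class $c$ is equationally Noetherian; see \cite{MR}. I would then use the implication, also noted in the text, that being equationally Noetherian forces $q_\omega$-compactness: if each system of equations over a group is equivalent to one of its finite subsystems, then the radical $\Rad_H(S)$ coincides with the union of the radicals $\Rad_H(S_0)$ taken over the finite subsystems $S_0\subseteq S$, which is precisely the defining property of $q_\omega$-compactness. Hence both $H$ and $H'$ are $q_\omega$-compact.

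With $H$ and $H'$ now known to be relative existentially closed and $q_\omega$-compact elements of the same variety of $1$-groups, Corollary~1 applies and yields $\Rad_H(S)=\Rad_{H'}(S)$ for every system $S$, that is, geometric equivalence. The only real content beyond bookkeeping is the equational Noetherian property of the two varieties, which is a genuine (commutative-algebraic) theorem rather than a formal consequence of what precedes; everything else is an immediate instance of Corollary~1. I therefore expect that step, the justification of equational Noetherianity, to be the sole obstacle, and I would dispatch it by citation rather than reproving it here.
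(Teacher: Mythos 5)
Your proposal is correct and is exactly the paper's own argument: the paper justifies this corollary solely by the sentence preceding it, namely that in the case $G=1$ every group in these two varieties is equationally Noetherian (a known result cited from the literature), hence $q_{\omega}$-compact, so that Corollary~1 applies directly. The only expository difference is that you spell out the implication from equationally Noetherian to $q_{\omega}$-compact, which the paper takes for granted with a pointer to \cite{MR}.
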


\end{document}